\documentclass[11pt]{article}
\usepackage[margin=1.2in]{geometry}
    \title{Inspections}
    \date{\today}
   \usepackage{amssymb}
\usepackage{amsthm}
\usepackage{amsmath} 
\usepackage{xcolor}
\usepackage{enumitem}
\usepackage{graphicx}
\usepackage{tikz}
\usepackage{pgfplots}
\pgfplotsset{compat=1.18}
\usepackage{natbib}

\newtheorem{theorem}{Theorem}
\newtheorem{corollary}[theorem]{Corollary}

\newtheorem{proposition}[theorem]{Proposition}
\newtheorem{remark}[theorem]{Remark}

\def\E{\mathbb{E}}
\def\e{\mathrm{e}}

\DeclareMathOperator*{\argmax}{arg\,max}

\begin{document}
\title{Using memory to control admission to unobservable queues}
\author{Refael Hassin  \footnote{School of Mathematical Sciences, Tel-Aviv University, Tel-Aviv, Israel.\\ e-mail: hassin@tauex.tau.ac.il}
   \and  Liron Ravner \footnote{Department of Statistics,  University of Haifa, Haifa, Israel.\\ e-mail: lravner@stat.haifa.ac.il} }

\maketitle

\begin{abstract}
We study admission control to an unobservable M/M/1 queue. A memoryless controller can only randomly thin arrivals (random routing, RR). We show that a gated admission (GA) policy,  blocking arrivals for a fixed period after each admission, stochastically dominates RR at equal throughput, improving social welfare under any sojourn-based cost. We characterize the welfare-maximizing threshold and define the Price of Forgetting as the welfare ratio. This ratio is unbounded even though the absolute welfare gain stays uniformly bounded.
\end{abstract}
 \vspace{1cm}
\noindent\textbf{Keywords: Admission control; Unobservable queues; Price of forgetting; Social welfare; GI/M/1.}

\section{Introduction}

In the unobservable queue of \citet{EH75}, a Poisson stream of potential customers, with average rate $\Lambda$, arrives at a service system with a single server. The duration of service of a customer is distributed according to an exponential distribution with rate $\mu$. The completion of the service gives the customer a reward $R$, and there is a constant cost $C$ of waiting per unit time.  Arriving customers cannot observe the length of the queue before deciding whether or not to join it, and they base their decision on the known parameters and their belief concerning the way others decide.   In this model, if $\Lambda$ is large, the expected benefit to a customer who joins will be negative if all others join.  The proposed solution is that each new arrival randomly decides to join with such a probability that their expected net benefit is zero. This outcome is clearly undesirable.  

\citet{EH75} propose to impose a fee that reduces customer motivation to join. With this fee, the net benefit of a customer is still zero, but there is a positive benefit to society from each service because the fee itself is considered a transfer payment that does not affect society as a whole. The optimal joining probability that maximizes the total net benefit produced by the system, as well as the corresponding fee and social welfare, is given in Chapter 3 of \cite{HH03}.  If the fee is collected by the server, then it is also the profit maximizing fee because it optimally controls the system and all the value is going to the server.  We observe that by levying this fee, the system is optimized both from the social welfare and profit perspective relative to any other method {\it that controls the Poisson rate of joining the queue}, when the controller and customers cannot observe the queue.

A central planner can adopt a strategy of randomly admitting each
arrival with probability 0.5, alternately admitting every second customer, or admitting a customer only if the time elapsed since the last admission exceeds a given threshold (for exponential arrivals with parameter $\lambda$ the corresponding threshold would be the median $\log 2/\lambda$). The first strategy is more
common in the strategic queueing literature; the latter options may be advantageous but require a stronger means of control, that is, using {\it memory}.

In this paper, we allow the queue controller to use information concerning past arrivals.  Thus, the decision whether to allow an arriving customer to join the queue depends on the history of joining instants. This information provides useful hints about the current length of the queue and enables the decision maker to avoid the accumulation of long queues.   

Suppose that the controller knows the past joining instants.  Clearly, later instants are more significant; the older the joining instant, the smaller its impact on the conditional distribution of the length of the queue.  However, optimally using this information seems both hard to compute and hard to implement.  Therefore, as a first step, we treat a simpler case in which only the most recent joining is considered.  This means a threshold strategy in which, after each instant of joining, further arrivals are blocked for a certain amount of time, denoted $\tau$.  Under this strategy, the system becomes a G/M/1 queue where the arrival process consists of two phases, one of a fixed length and the length of the other is exponentially distributed. 

Our goal is to calculate the expected waiting time in this system as a function of $\tau$, then to compute the threshold value that maximizes the expected social welfare, and finally to compute the price-of-forgetting (PoF), which compares the maximum social welfare with and without memory.  We show that the PoF is significant when the service value is small (in terms of the waiting cost per expected service duration) and the utilization factor is large.  In fact, the PoF is unbounded.

We note that our two-phase G/M/1 system is of independent interest.  It corresponds to a situation where every new arrival blocks entry to the queue for some fixed time.  For example, to prevent unauthorized entry or to maintain privacy.

Our contributions are as follows. First, we show that gated admission stochastically dominates random routing at equal throughput (Proposition~\ref{prop:rho_dominant} and Corollary~\ref{corr:u_dominant}), so that a single bit of memory improves welfare under any sojourn-based cost. Second, we obtain the welfare-maximizing threshold and the optimal social welfare in closed form (Section~\ref{sec:opt_GA}). Third, we prove that the price of forgetting is unbounded and derive its exact divergence rate as the reward approaches the service cost (Proposition~\ref{prop:PoF_divergence}), while showing that the absolute welfare gain stays uniformly bounded (Remark~\ref{rem:bound_diff}). This contrasts with the load-balancing setting of \citet{AG11}, where the PoF is at most~$2$.

\subsection{Literature}

The following research has been carried out on the control of a queueing system using memory.   It may come as a surprise that the most basic M/M/1 system has not been analyzed, which is the subject of this paper.

\citet{Lin03}, \citet{LR03, LR04} and \citet{HS07} consider variations of loss systems with Poisson arrivals, exponentially distributed service duration, and a gatekeeper. Costs are incurred when an arrival is blocked by the gatekeeper and when an admitted customer
must leave the system because the server is busy. The gatekeeper is informed when an admitted customer
finds the server busy, but not when service ends. The discounted cost of the system or its average rate is minimized by a threshold-type policy that blocks entry for a certain period of time after a new arrival.  This property does not always hold for a general distribution of the duration of the service.

\citet{AG11} consider the routing of a Poisson arrival stream to
servers with general heterogeneous service distributions
by a {\it broker} who cannot observe the state of the queues, but knows the previous
dispatching decisions. The ratio of expected delays without and with memory
is {\it the Price of Forgetting} ({\bf PoF}).
When the service is exponentially distributed, {\bf PoF}$\le2$.

\section{Model}\label{sec:model}

Consider a single-server queue with a Poisson arrival process with rate $\lambda$ and exponentially distributed service times with rate $\mu$. For every served customer, there is a reward $R>0$ and a holding cost of $C>0$ per unit of time in the system. We are interested in adopting a static admission policy that maximizes the long-term expected social welfare. By static we mean that the policy is not a function of the queue-length process.

We compare two admission control strategies:
\begin{itemize}
    \item \textbf{Random routing (RR):} Customers are admitted to the system upon arrival with probability $p\in[0,1]$. We denote this policy by $\pi_p$.
    \item \textbf{Gated admission (GA):} After every admission, the arrival stream is blocked for a period of $\tau$ units of time. We denote this policy by $\pi_\tau$. 
\end{itemize}

Under RR the queue process is M/M/1 with arrival rate $\lambda p$. Thus, the stationary expected social welfare (per time unit) is
\begin{align*}
    u(\pi_p)=\lambda p R- \E_p[L]C\ , p\in\left[0,\frac{\mu}{\lambda}\right)\ , 
\end{align*}
where $\E_p[L]$ is the expected queue length at arrival instants given the probability of joining $p$. Clearly, the choice of $p$ must satisfy $\lambda p<\mu$, otherwise the queue is not stable. Let $\rho(p):=\frac{\lambda p}{\mu}$, then  
\begin{align*}
    \E_p[L]=\frac{\rho(p)}{1-\rho(p)}=\frac{\lambda p}{\mu-\lambda p}\ .
\end{align*}

Under GA the queue process is GI/M/1 with iid inter-arrival times distributed as $A\sim\tau+\mathrm{Exp}(\lambda)$. We denote the Laplace-Stieltjes Transform (LST) of $A$ by $a(s):=\E[e^{-sA}]$. The stationary expected social welfare in this case is
\begin{align*}
    u(\pi_\tau)=\lambda_\tau R- C\E_\tau[L]\ , \tau >\max\left\lbrace 0,\frac{1}{\mu}-\frac{1}{\lambda}\right\rbrace , 
\end{align*}
where the effective arrival rate satisfies
\begin{align*}
    \lambda_\tau =\frac{1}{\E[A]}=\frac{\lambda}{1+\lambda\tau}<\mu \ , \forall \tau >\max\left\lbrace 0,\frac{1}{\mu}-\frac{1}{\lambda}\right\rbrace  \ .
\end{align*}
For the G/M/1 queue, the queue length at arrival times is a Geometric random variable with a parameter $\sigma(\tau)$ that is the smallest solution of
\begin{equation}\label{eq:GM1_fixed_point}
    \sigma(\tau)=a(\mu(1-\sigma(\tau)))\ .    
\end{equation}
The corresponding expected queue length (at arrival times) is thus $\frac{\sigma(\tau)}{1-\sigma(\tau)}$. However, as PASTA does not hold, the time average mean, which appears in the mean social welfare, is different. Specifically, the sojourn time in the GI/M/1 system is exponential with parameter $\mu(1-\sigma(\tau))$, hence by Little's law,
\begin{align*}
    \E_\tau[L]=\frac{\lambda_\tau}{\mu(1-\sigma(\tau))}\ .
\end{align*}
As $A$ is the sum of a constant and an exponential random variable, we have
\begin{align*}
    a(s)=\frac{\lambda \e^{-s\tau}}{\lambda+s} \ .
\end{align*}
For the sake of brevity, from now on we use $\rho$ instead of $\rho(p)$ for the RR system and $\sigma$ instead of $\sigma(\tau)$ for the GA system, keeping in mind that these values always depend on the policy parameters.

\section{Dominance of gated admission}\label{sec:dominant_GA}

We next establish that whenever the effective arrival rates for both policies are equal, the expected queue length under GA is strictly smaller under RR. An immediate conclusion of this result is that the optimal GA policy achieves a higher social welfare than the optimal RR policy.

\begin{proposition}\label{prop:rho_dominant}
 If $\lambda_\tau=\lambda p$ and $p\in(0,1)$, then
 \begin{equation}\label{eq:rho_dominant}
      \rho>\sigma\ .
\end{equation}
\end{proposition}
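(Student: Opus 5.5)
The plan is to compare the fixed-point equation \eqref{eq:GM1_fixed_point} for the GA system with the corresponding equation for an M/M/1 queue that has the same throughput. Write $\lambda_\tau=\lambda p=\rho\mu$. The hypothesis $p\in(0,1)$ forces $\tau>0$, since $\lambda_\tau=\lambda/(1+\lambda\tau)<\lambda$. Stability gives $\rho<1$.

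\textbf{Step 1: reduce the claim to a pointwise inequality.} Define $f(x)=a(\mu(1-x))-x$ on $[0,1]$. We have
\[
f(0)=a(\mu)>0,\qquad f(1)=a(0)-1=0 .
\]
The function $f$ is convex, because an LST is convex in $s$ and we compose it with an affine map. Its slope at $x=1$ is $f'(1)=\mu\E[A]-1=1/\rho-1>0$. Hence $f$ has exactly one root $\sigma$ in $(0,1)$, and this is the smallest solution of \eqref{eq:GM1_fixed_point}. Moreover, $f>0$ on $[0,\sigma)$ and $f<0$ on $(\sigma,1)$. So to prove $\rho>\sigma$ it suffices to show $f(\rho)<0$, that is,
\[
a(\mu(1-\rho))<\rho .
\]

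\textbf{Step 2: compare with the exponential LST.} Let $b(s)=\lambda_\tau/(\lambda_\tau+s)$ be the LST of an $\mathrm{Exp}(\lambda_\tau)$ inter-arrival time, which has the same mean as $A$. A one-line check gives
\[
b(\mu(1-\rho))=\frac{\rho\mu}{\rho\mu+\mu-\rho\mu}=\rho .
\]
This is just the M/M/1 fixed point. So it suffices to prove $a(s)<b(s)$ for every $s>0$, and then apply it at $s=\mu(1-\rho)>0$.

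\textbf{Step 3: the key inequality $a(s)<b(s)$ for $s>0$.} Set
\[
g(s)=\log a(s)-\log b(s)=-s\tau+\log\lambda-\log(\lambda+s)-\log\lambda_\tau+\log(\lambda_\tau+s).
\]
Then $g(0)=0$. Differentiating,
\[
g'(s)=-\tau-\frac{1}{\lambda+s}+\frac{1}{\lambda_\tau+s},
\]
so $g'(0)=-\tau-1/\lambda+1/\lambda_\tau=0$, because $1/\lambda_\tau=\tau+1/\lambda$ (equal means). Differentiating again,
\[
g''(s)=\frac{1}{(\lambda+s)^2}-\frac{1}{(\lambda_\tau+s)^2}<0,
\]
since $\lambda>\lambda_\tau$. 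Thus $g$ is strictly concave with $g(0)=g'(0)=0$, which gives $g(s)<0$ for all $s>0$. This yields $a(\mu(1-\rho))<\rho$, and Step 1 then gives $\sigma<\rho$.

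\textbf{Main obstacle.} The only delicate point is Step 1: we must be sure that $\sigma$ is the root below $1$ and correctly locate the sign of $f$ around it. This needs the convexity of $f$, the endpoint value $f(1)=0$, and $f'(1)>0$, the last of which uses stability $\rho<1$. The comparison in Step 3 is elementary once we pass to logarithms.
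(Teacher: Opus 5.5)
Your proof is correct and follows the paper's route: you show that $a(s)$ lies strictly below the exponential transform $b(s)$ (the paper's $\tilde a$) with the same mean for $s>0$, and then use the convexity and sign structure of the fixed-point map. The differences are minor. You derive $a(s)<b(s)$ from $g(0)=g'(0)=0$ and $g''<0$ for $g=\log a-\log b$, whereas the paper uses $\e^{-x}\le 1/(1+x)$. You evaluate the GA map at $\rho$ rather than the M/M/1 map at $\sigma$. Your check $f'(1)=1/\rho-1>0$ makes explicit the stability condition that the paper's claim of a unique fixed point in $(0,1)$ tacitly relies on.
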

\begin{proof}
Fix $\lambda,\mu>0$ and let $p\in(0,1)$ be such that $\rho(p)=\lambda p/\mu<1$. The GA threshold $\tau> 0$ is chosen so that the arrival rates are equal under both policies;
\begin{equation}\label{eq:equal_rates}
    \lambda p=\lambda_\tau=\frac{\lambda}{1+\lambda\tau}\quad\Longrightarrow\quad
p=\frac{1}{1+\lambda\tau}\ .
\end{equation}
Let $\tilde a(s)$ be the LST of an exponential random variable with rate $\lambda p$. Then, by \eqref{eq:equal_rates},
\begin{align*}
\tilde a(s)=\frac{\lambda p}{\lambda p+s}=\frac{\lambda}{\lambda+s(1+\lambda\tau)} \ .
\end{align*}
For any $s\geq 0$, we have that
\begin{equation}\label{eq:a_inequality}
    a(s)=\frac{\lambda e^{-s\tau}}{\lambda+s}
\leq \frac{\lambda}{(\lambda+s)(1+\tau s)}=\frac{\lambda}{\lambda+s+\lambda s\tau+\tau s^2}
\leq \frac{\lambda}{\lambda+s(1+\lambda\tau)}= \tilde a(s)\ ,
\end{equation}
where the exponential inequality $\e^{-x}\leq \frac{1}{1+x}$ for all $x\geq 0$ was used in the first inequality. Note that the second inequality becomes strict if $s>0$.

As M/M/1 is a special case of G/M/1, $\rho$ is also a solution of a fixed point equation. That is,
\begin{align*}
    \rho=\tilde{a}(\mu(1-\rho))\ ,
\end{align*}
and 
\begin{align*}
    \sigma=a(\mu(1-\sigma))\ .
\end{align*}
Let $g(x):=a(\mu(1-x))$. From the properties of the LST, $g$ is a convex function that increases with $x>0$ and satisfies
\begin{align*}
    g(0)=a(\mu)\in(0,1)\  , g(1)=a(0)=1\ .
\end{align*}
The same is true for the function $f(x):=\tilde{a}(\mu(1-x))$ with the boundary values
\begin{align*}
    f(0)=\tilde{a}(\mu)\geq a(\mu)\  , f(1)=\tilde{a}(0)=1\ ,
\end{align*}
where the inequality follows from \eqref{eq:a_inequality}.
Hence, both functions admit a unique fixed point in $x\in(0,1)$. Suppose that $\sigma=g(\sigma)$, then by \eqref{eq:a_inequality},
\begin{align*}
    \sigma=g(\sigma)<f(\sigma)\ ,
\end{align*}
which implies that the solution of $\rho=f(\rho)$ has to be larger than $\sigma$, i.e., $\rho>\sigma$. 
\end{proof}


Proposition \ref{prop:rho_dominant} implies a stochastic ordering of the queue length (and sojourn time) distributions of both policies when the arrival rate is the same in both systems; the queue is stochastically greater in the RR system. In particular, since $\lambda_\tau=\lambda p$ and $\sigma<\rho$, the time-average expected number in system satisfies $\E_\tau[L]=\lambda_\tau/(\mu(1-\sigma))<\lambda p/(\mu(1-\rho))=\E_p[L]$, so GA incurs a strictly smaller holding cost at equal throughput. This further yields the following corollary. 
\begin{corollary}\label{corr:u_dominant}
    The optimal social welfare achievable by the RR policy is always smaller than the optimal social welfare achievable by the GA policy;
\begin{equation}\label{eq:u_dominant}
    \max_{p\in[0,1]} u(\pi_p) \leq \max_{\tau\geq 0}u(\pi_\tau)\ .
\end{equation}
\end{corollary}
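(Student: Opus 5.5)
The plan is to compare the two policies pointwise: for every admissible $p$ we produce a $\tau$ with the same throughput and at least the same welfare, and then take the maximum on both sides.

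Fix an arbitrary admissible $p$, that is, $p\in[0,1]$ with $\lambda p<\mu$; values with $\lambda p\geq\mu$ give an unstable queue and can be assigned welfare $-\infty$, so they never attain the maximum. We split into three cases.

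\emph{Case $p=0$.} Here $u(\pi_0)=0$. Letting $\tau\to\infty$ gives $\lambda_\tau\to0$ and $\sigma\to 0$, because $a(\mu)=\lambda\e^{-\mu\tau}/(\lambda+\mu)\to0$ and $\sigma\leq g(\sigma)$ is dominated accordingly. Hence $u(\pi_\tau)\to 0$, so $\sup_\tau u(\pi_\tau)\ge 0$. If the maximum is meant literally, we can instead include $\tau=\infty$, which means admitting no one, as a degenerate GA policy with welfare $0$.

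\emph{Case $p=1$.} Here $\tau=0$ reproduces RR exactly. Then $A\sim\mathrm{Exp}(\lambda)$ and $\sigma=\rho=\lambda/\mu$, so the two welfares coincide.

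\emph{Case $p\in(0,1)$.} Choose $\tau=(1-p)/(\lambda p)>0$, so that $\lambda_\tau=\lambda/(1+\lambda\tau)=\lambda p$, as in \eqref{eq:equal_rates}. This $\tau$ is admissible: $\lambda_\tau=\lambda p<\mu$ is equivalent to $\tau>1/\mu-1/\lambda$. Proposition~\ref{prop:rho_dominant} then gives $\sigma<\rho$, and therefore
\[
\E_\tau[L]=\frac{\lambda_\tau}{\mu(1-\sigma)}<\frac{\lambda p}{\mu(1-\rho)}=\E_p[L].
\]
Since the reward terms $\lambda_\tau R=\lambda pR$ are equal, $u(\pi_\tau)>u(\pi_p)$.

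Combining the cases, every RR welfare value is at most $\sup_\tau u(\pi_\tau)$, which yields \eqref{eq:u_dominant}.

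The remaining issue, and the main obstacle, is that the maxima are actually attained rather than just suprema, and there are two routes. On the RR side, $u(\pi_p)$ is continuous on $[0,\min(1,\mu/\lambda))$ and tends to $-\infty$ as $\lambda p\uparrow\mu$, so a maximizer exists. On the GA side, $u(\pi_\tau)$ is continuous in $\tau$ on the admissible set, because $\sigma(\tau)$ is the smallest root of a smooth fixed-point equation with $g'(\sigma)<1$, so the implicit function theorem applies. Moreover, $u(\pi_\tau)\to-\infty$ at the lower stability boundary and $u(\pi_\tau)\to0$ as $\tau\to\infty$. So either the supremum is attained at a finite $\tau$, or it equals $0$, which is reached by the degenerate policy. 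Alternatively, it suffices to read \eqref{eq:u_dominant} with $\sup$ on the right-hand side, since the inequality is all the argument needs.
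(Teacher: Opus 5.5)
Your proposal is correct and is essentially the paper's argument: match throughput via $\tau=(1-p)/(\lambda p)$, apply Proposition~\ref{prop:rho_dominant} to get $\sigma<\rho$ and hence a strictly smaller holding cost at equal reward, and note that $p=1$ corresponds to $\tau=0$. Your extra treatment of $p=0$ and of whether the maxima are attained is more careful than the paper's one-line proof, but it does not change the route. For $p=0$ you use $\tau\to\infty$ or the degenerate policy $\tau=\infty$, which the paper uses implicitly in Proposition~\ref{prop:cases}(i); your phrase ``$\sigma\leq g(\sigma)$ is dominated'' should be replaced by: $g(1/2)\le a(\mu/2)<1/2$ for large $\tau$, so $\sigma=g(\sigma)\le a(\mu/2)\to 0$.
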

\begin{proof}
    As any RR policy $p\in(0,1)$ can be improved by selecting $\tau$ such that the arrival rate is the same and the expected queue length is shorter, we conclude that \eqref{eq:u_dominant} holds.  Note that it is possible that $p=1$ and $\tau=0$ are optimal, in which case both systems are identical and produce the same social welfare.
\end{proof}

\begin{remark}\label{rem:general_bound}
    The implication of Proposition~\ref{prop:rho_dominant} is, in fact, more general than the dominance of linear social welfare as defined in Section~\ref{sec:model}. The stochastic dominance implies that the GA policy attains a better optimum than the RR policy for any objective function that is monotone decreasing with the sojourn time of customers.
\end{remark}

\section{Optimal gated admission}\label{sec:opt_GA}

We now wish to characterize the optimal GA threshold
\begin{align*}
    \tau^*=\argmax_{\tau\geq 0} u(\pi_\tau)\ . 
\end{align*}
With a slight abuse of the previous notation, we can write the objective function as 

\begin{align*}
    u(\tau)=\lambda_\tau\left(R-C\frac{\E_\tau[L]}{\mu}\right)=\frac{\lambda }{1+\lambda\tau}\left(R- \frac{C}{\mu(1-\sigma(\tau))}\right)\ ,
\end{align*}
where $\sigma(\tau)\in(0,1)$ solves \eqref{eq:GM1_fixed_point}, which in this case can be written as
\begin{align*}
\sigma(\tau)=\frac{\lambda e^{-\mu(1-\sigma(\tau))\tau}}{\lambda+\mu(1-\sigma(\tau))}.
\end{align*}
Equivalently, for any $\sigma\in(0,1)$ one can invert this fixed point equation,
to obtain
\begin{equation}\label{eq:tau_sigma}
\tau(\sigma)=\frac{1}{\mu(1-\sigma)}
\log\left(\frac{\lambda}{\sigma\bigl(\lambda+\mu(1-\sigma)\bigr)}\right).
\end{equation}
Thus, welfare can be expressed as a function of $\sigma$,
\begin{align*}
u(\sigma)=\frac{\lambda\mu(1-\sigma)}{\zeta(\sigma)}\left(R-\frac{C}{\mu(1-\sigma)}\right)=\frac{\lambda\bigl(\mu(1-\sigma)R-C\bigr)}{\zeta(\sigma)}\ ,
\end{align*}
where
\begin{align*}
\zeta(\sigma)=\lambda\log\left(\frac{\lambda}{\sigma(\lambda+\mu(1-\sigma))}\right)+\mu(1-\sigma)\ .
\end{align*}

An interior maximizer $\sigma^*\in(0,\rho)$, with $\rho=\lambda/\mu$, satisfies
\begin{align*}
\mu R\,\zeta(\sigma^*)=\bigl(\mu(1-\sigma^*)R-C\bigr)\,\zeta'(\sigma^*)\ ,
\end{align*}
where
\begin{align*}
\zeta'(\sigma)=-\mu-\frac{\lambda(\lambda+\mu-2\mu\sigma)}{\sigma(\lambda+\mu(1-\sigma))}\ .
\end{align*}
The corresponding optimal threshold is $\tau^*=\tau(\sigma^*)$ as defined in \eqref{eq:tau_sigma}.

It is possible that the optimal policy admits all arriving customers. At $\tau=0$ we have $\sigma=\rho$, $\zeta(\rho)=\mu(\rho-1)$ and $\zeta'(\rho)=\mu(2-\rho)$, so
\begin{align*}
\tau^*=0 \quad\Longleftrightarrow\quad
\frac{ R\mu  }{C}\geq \frac{2-\rho}{(1-\rho)^2}\ .
\end{align*}
Otherwise, if $\lambda R/C<(2-\rho) /(1-\rho)^2$, the maximizer is the unique interior solution $\tau^*=\tau(\sigma^*)$ of the first-order condition above.


\section{Social welfare comparison and the Price of Forgetting}\label{sec:PoF}

From now on we set $\mu=C=1$ without loss of generality and work with the two dimensionless parameters
\begin{align}\label{eq:params}
    \rho := \lambda \in(0,1)\ ,\qquad \nu := R\ ,
\end{align}
where $\rho$ is the server utilization under full admission and $\nu$ is the normalized reward-to-cost ratio.

We define the price of forgetting as
\begin{align}\label{eq:PoF_def}
    \mathrm{PoF}(\rho,\nu) := \frac{u^*_{\mathrm{GA}}(\rho,\nu)}{u^*_{\mathrm{RR}}(\rho,\nu)}\ ,
\end{align}
with the convention $\mathrm{PoF}:=\infty$ when $u^*_{\mathrm{RR}}=0<u^*_{\mathrm{GA}}$.

The social welfare under RR is 
$$u(\pi_p) = \rho p\nu - \frac{\rho p}{1-\rho p}\ ,$$
for $p\in[0,1]$. Equating the derivative to zero yields $(1-\rho p)^2=1/\nu$, therefore, the candidate interior optimum of the RR policy is
\begin{align}\label{eq:p_star}
    p^* = \frac{1}{\rho}\left(1 - \frac{1}{\sqrt{\nu}}\right)\ .
\end{align}
This lies in $(0,1)$ if and only if $1 < \nu < 1/(1-\rho)^2$. Of course, $\nu>1$ is a natural assumption because otherwise the expected cost of a customer's own service time is higher than the expected reward.  The optimal social welfare under RR is thus
\begin{align}\label{eq:u_RR}
u^*_{\mathrm{RR}}(\rho,\nu) = \begin{cases}
    0 & \nu \leq 1\ , \\
    1+\nu-2\sqrt{\nu} & 1 < \nu < 1/(1-\rho)^2\ , \\
    \rho\nu - \rho/(1-\rho) & \nu \geq 1/(1-\rho)^2\ .
\end{cases}
\end{align}

With $\mu=C=1$, the GA welfare becomes $u(\sigma) = \rho((1-\sigma)\nu - 1)/\zeta(\sigma)$, where $\zeta(\sigma)=\rho\log(\rho/(\sigma(\rho+1-\sigma)))+(1-\sigma)>0$.  Since $\zeta>0$, we have $u(\sigma)>0$ if and only if $(1-\sigma)\nu>1$, i.e., $\sigma<1-1/\nu$.  In particular, $u^*_{\mathrm{GA}}=0$ for $\nu\leq 1$: when the reward does not cover a single customer's expected service cost, neither policy benefits from admission.
 
\begin{proposition}\label{prop:cases}
Let $\nu_1:=1/(1-\rho)^2$ and $\nu_2:=(2-\rho)/(1-\rho)^2$. The optimal policies are characterized as follows.
\begin{enumerate}
    \item[\rm(i)] For $\nu \leq 1$, $p^*=0$ and $\tau^*=\infty$; $u^*_{\mathrm{RR}}=u^*_{\mathrm{GA}}=0$.
    \item[\rm(ii)] For $1 < \nu <  \nu_1$, both policies have interior optima; $p^*\in(0,1)$ given by \eqref{eq:p_star} and $\tau^*>0$.
    \item[\rm(iii)] For $\nu_1 \leq \nu < \nu_2$, $p^*=1$ and $\tau^*>0$.
    \item[\rm(iv)] For $\nu \geq  \nu_2$, $p^*=1$ and $\tau^*=0$; $u^*_{\mathrm{RR}}=u^*_{\mathrm{GA}}$ and $\mathrm{PoF}=1$.
\end{enumerate}
In cases \rm{(ii)} and \rm{(iii)}, $\mathrm{PoF}>1$.
\end{proposition}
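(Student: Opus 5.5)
The plan is to treat the two policies separately and then compare them. For RR, I will use that $u(\pi_p)=\rho p\nu-\rho p/(1-\rho p)$ is strictly concave in $p\in[0,1]$, because $x\mapsto x/(1-x)$ is convex on $[0,1)$. Its derivative is $\rho\nu-\rho/(1-\rho p)^2$. At $p=0$ this equals $\rho(\nu-1)$, and at $p=1$ it equals $\rho(\nu-\nu_1)$. Concavity then gives the three regimes at once: $p^*=0$ for $\nu\le 1$, the interior value \eqref{eq:p_star} for $1<\nu<\nu_1$, and $p^*=1$ for $\nu\ge\nu_1$.

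For GA, I will work in the variable $\sigma\in(0,\rho]$ via \eqref{eq:tau_sigma}. Here $\sigma=\rho$ corresponds to $\tau=0$, $\sigma\downarrow 0$ corresponds to $\tau\to\infty$, and the map is monotone. The key structural fact is that
\[
\zeta(\sigma)=\rho\bigl(\log\rho-\log\sigma-\log(\rho+1-\sigma)\bigr)+1-\sigma
\]
is positive and convex on $(0,\rho]$, since $-\log\sigma$ and $-\log(\rho+1-\sigma)$ are convex and the remaining terms are affine. The numerator $N(\sigma)=\rho((1-\sigma)\nu-1)$ is affine. Hence for every $t>0$ the superlevel set $\{u\ge t\}=\{N-t\zeta\ge 0\}$ is convex, so $u$ is quasi-concave on the region where it is positive. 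Consequently, the maximizer is at the endpoint $\sigma=\rho$ exactly when $u'(\rho)\ge 0$, and otherwise it is interior.

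To locate the boundary, I will evaluate the relevant quantities at $\sigma=\rho$. There $\zeta(\rho)=1-\rho$, and the formula for $\zeta'$ (with $\mu=1,\lambda=\rho$) gives $\zeta'(\rho)=-1-(1-\rho)=-(2-\rho)$. The sign of $u'(\rho)$ is the sign of $N'\zeta-N\zeta'$, which is proportional to
\[
-\nu(1-\rho)+\bigl((1-\rho)\nu-1\bigr)(2-\rho)=\nu(1-\rho)^2-(2-\rho).
\]
This is nonnegative if and only if $\nu\ge\nu_2$, so $\tau^*=0$ if and only if $\nu\ge\nu_2$. Since $2-\rho>1$, we have $\nu_2>\nu_1$, which puts cases (ii) and (iii) in the right order.

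For case (i), I will use that $(1-\sigma)\nu<1$ for all $\sigma>0$ when $\nu\le1$, so $u<0$ throughout. As $\sigma\to0$, $\zeta\to\infty$ and hence $u\to0$, so the supremum $0$ is attained only in the limit $\tau^*=\infty$. Case (iv) follows because $\tau^*=0$ and $p^*=1$ both mean full admission, so the two welfares coincide.

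For the strict inequality $\mathrm{PoF}>1$, I treat the two cases separately.
\begin{itemize}
\item In case (ii), $p^*\in(0,1)$. Proposition~\ref{prop:rho_dominant} supplies a $\tau$ with the same throughput and strictly smaller $\E_\tau[L]$. This gives $u^*_{\mathrm{GA}}\ge u(\pi_\tau)>u^*_{\mathrm{RR}}=(\sqrt\nu-1)^2>0$.
\item In case (iii), $u^*_{\mathrm{RR}}=u(\tau=0)$. We have $u^*_{\mathrm{RR}}>0$ because $\rho\nu-\rho/(1-\rho)\ge\rho/(1-\rho)^2-\rho/(1-\rho)>0$. Since $u'(\rho)<0$, moving $\sigma$ slightly below $\rho$ strictly increases welfare, so $u^*_{\mathrm{GA}}>u^*_{\mathrm{RR}}$.
\end{itemize}

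The main obstacle I expect is the global step of turning the local derivative sign at $\tau=0$ into a statement about the optimum. The quasi-concavity argument, based on the convexity of $\zeta$ and the positivity of $\zeta$ on $(0,\rho]$, is what I would rely on. I would double-check that $\zeta>0$ and that $N>0$ on the relevant range, so that the superlevel-set argument applies.
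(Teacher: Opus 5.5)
Your proof is correct, but it takes a different and more complete route than the paper's. The paper's proof is essentially a list of citations:
\begin{itemize}
\item the RR regimes are read off \eqref{eq:p_star}--\eqref{eq:u_RR};
\item the threshold $\nu_2$ is taken from Section~\ref{sec:opt_GA}, where it comes from the derivative at $\tau=0$ and uniqueness of the interior maximizer is asserted without a global argument;
\item $\mathrm{PoF}>1$ on $1<\nu<\nu_2$ is attributed to Corollary~\ref{corr:u_dominant}.
\end{itemize}
You do three things differently.
\begin{itemize}
\item[(a)] You derive the RR regimes from strict concavity in $p$.
\item[(b)] You supply the missing global step for GA: $\zeta$ is convex and positive and the numerator is affine, which is exactly what makes the sign of $u'(\rho)$ decisive. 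Your values $\zeta(\rho)=1-\rho$ and $\zeta'(\rho)=-(2-\rho)$ are the correct ones. Section~\ref{sec:opt_GA} prints them with opposite signs, though its final threshold is right.
\item[(c)] You obtain strictness in case (iii) from $u'(\rho)<0$.
\end{itemize}
Point (c) is not just a stylistic choice. Corollary~\ref{corr:u_dominant} gives only a weak inequality, and Proposition~\ref{prop:rho_dominant} requires $p\in(0,1)$. So when $p^*=1$, strictness must come from $\tau=0$ being suboptimal, which is precisely your local-improvement argument. The paper's route buys brevity by reusing the dominance result; yours buys an actual proof of global optimality and of the strict inequality.

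Two small things to tighten.
\begin{itemize}
\item Quasi-concavity alone does not settle the tie case $u'(\rho)=0$, i.e.\ $\nu=\nu_2$. Your ingredients do, though. Take $t=u(\rho)>0$. Then $\phi=N-t\zeta$ is concave with $\phi(\rho)=0$ and $\phi'(\rho)=\zeta(\rho)u'(\rho)\geq 0$. Hence $\phi\leq 0$ on $(0,\rho]$, which means $u\leq u(\rho)$.
\item When $u'(\rho)<0$ and $\nu>1$, you should say why an interior maximizer is attained. This follows from $u\to 0$ as $\sigma\downarrow 0$, together with $u>0$ for small $\sigma$.
\end{itemize}
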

\begin{proof}
The optimal RR policy follows from \eqref{eq:p_star} and \eqref{eq:u_RR}.  The GA boundary $\nu_2$ is derived in Section~\ref{sec:opt_GA}: $\tau^*=0$ iff $\nu\geq(2-\rho)/(1-\rho)^2$.  For $\nu\leq 1$, $u^*_{\mathrm{GA}}=0$ as shown above.  For $1<\nu<\nu_2$, $u^*_{\mathrm{GA}}>u^*_{\mathrm{RR}}$ by Corollary~\ref{corr:u_dominant}, giving $\mathrm{PoF}>1$.
\end{proof}
 
The PoF is finite in cases (ii) and (iii) and the following proposition shows it is in fact unbounded as $\nu\downarrow 1$. We use the notation $k(\epsilon)\sim h(\epsilon)$ to denote $k(\epsilon)=h(\epsilon)+o(\epsilon)$ as $\epsilon\downarrow 0$.

\begin{proposition}\label{prop:PoF_divergence}
As $\varepsilon:=\nu-1\downarrow 0$, 
\begin{enumerate}
    \item[\rm(a)] $p^*\sim \varepsilon/(2\rho)$ and $u^*_{\mathrm{RR}}=(\sqrt\nu-1)^2\sim \varepsilon^2/4$.
    \item[\rm(b)] $\sigma^*\sim \varepsilon/\bigl(\log(1/\varepsilon)+\log\log(1/\varepsilon)\bigr)$ and $u^*_{\mathrm{GA}}\sim \varepsilon/\bigl(\log(1/\varepsilon)+\log\log(1/\varepsilon)\bigr)$.
    \item[\rm(c)] $\mathrm{PoF}(\rho,\nu)\sim 4/\bigl(\varepsilon(\log(1/\varepsilon)+\log\log(1/\varepsilon))\bigr)\to\infty$.
\end{enumerate}
\end{proposition}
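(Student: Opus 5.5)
Part (a) follows directly from \eqref{eq:p_star} and \eqref{eq:u_RR}. For $\nu=1+\varepsilon$ with $\varepsilon$ small we are in case (ii) of Proposition~\ref{prop:cases}, since $1<\nu<1/(1-\rho)^2$. Expanding $\sqrt{1+\varepsilon}=1+\varepsilon/2-\varepsilon^2/8+O(\varepsilon^3)$ gives $1-1/\sqrt\nu=\varepsilon/2+O(\varepsilon^2)$, hence $p^*\sim\varepsilon/(2\rho)$. The same expansion gives $u^*_{\mathrm{RR}}=(\sqrt\nu-1)^2=\varepsilon^2/4+O(\varepsilon^3)$.

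For part (b) I will work with the $\sigma$-parametrization $u(\sigma)=\rho((1-\sigma)\nu-1)/\zeta(\sigma)$ from Section~\ref{sec:opt_GA}.

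\emph{Localization.} Since $u(\sigma)>0$ only for $\sigma<1-1/\nu=\varepsilon/(1+\varepsilon)$, the maximizer satisfies $\sigma^*<\varepsilon$. In particular $\sigma^*\to0$, so only small $\sigma$ matters.

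\emph{Simplified objective.} On this range the numerator is $\rho\bigl(\varepsilon-\sigma(1+\varepsilon)\bigr)=\rho(\varepsilon-\sigma)(1+O(\varepsilon))$. The denominator is
\begin{align*}
\zeta(\sigma)=\rho\Bigl(\log(1/\sigma)+c_\rho\Bigr)+O(\sigma),\qquad c_\rho:=\log\frac{\rho}{1+\rho}+\frac1\rho .
\end{align*}
The factor $\rho$ cancels, and the problem reduces to maximizing
\begin{align*}
\phi(\sigma):=\frac{\varepsilon-\sigma}{L(\sigma)},\qquad L(\sigma):=\log(1/\sigma)+c_\rho ,
\end{align*}
up to multiplicative errors $1+O(\varepsilon)$.

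\emph{First-order condition.} Since $L'(\sigma)=-1/\sigma$, setting $\phi'=0$ gives $-L+(\varepsilon-\sigma)/\sigma=0$, that is,
\begin{align*}
\sigma=\frac{\varepsilon}{L(\sigma)+1}.
\end{align*}
Substituting back gives $\phi(\sigma^*)=\dfrac{\varepsilon L/(L+1)}{L}=\dfrac{\varepsilon}{L+1}=\sigma^*$. So $u^*_{\mathrm{GA}}$ and $\sigma^*$ share the same leading asymptotics, which is exactly the claim in (b).

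\emph{Solving the fixed point.} I iterate $L=\log(1/\sigma)+c_\rho=\log\bigl((L+1)/\varepsilon\bigr)+c_\rho$. The first iterate is $L\approx\log(1/\varepsilon)$. The second is
\begin{align*}
L=\log(1/\varepsilon)+\log\log(1/\varepsilon)+O(1),
\end{align*}
which yields $\sigma^*\sim\varepsilon/\bigl(\log(1/\varepsilon)+\log\log(1/\varepsilon)\bigr)$. The $O(1)$ terms, including $c_\rho$ and the $+1$, only perturb the relative error by $O(1/\log(1/\varepsilon))$. Since the paper's $\sim$ is stated with an additive $o(\varepsilon)$ error, this is more than enough.

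\emph{Main obstacle.} The hard part is making this rigorous rather than formal. I need that the true maximizer of $u$ is close to the maximizer of $\phi$, and that the $O(\sigma)$ and $O(\varepsilon)$ corrections in $\zeta$ and in the numerator do not shift the optimal value at leading order. I would handle this in three steps. First, bound $u$ between $(1\pm C\varepsilon)\phi$ uniformly on $(0,\varepsilon)$. Second, note that $\phi$ is unimodal on $(0,\varepsilon)$, because $\phi'$ has the sign of $\varepsilon/\sigma-1-L(\sigma)$, which is strictly decreasing in $\sigma$. Third, conclude that $\max u=(1+O(\varepsilon))\max\phi$. The location $\sigma^*$ then follows because $\phi$ drops by a definite factor away from a neighbourhood of the critical point of relative width $o(1)$.

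Part (c) is then the quotient of (b) and (a):
\begin{align*}
\mathrm{PoF}=\frac{u^*_{\mathrm{GA}}}{u^*_{\mathrm{RR}}}\sim\frac{\varepsilon/\bigl(\log(1/\varepsilon)+\log\log(1/\varepsilon)\bigr)}{\varepsilon^2/4}=\frac{4}{\varepsilon\bigl(\log(1/\varepsilon)+\log\log(1/\varepsilon)\bigr)},
\end{align*}
which tends to $\infty$ as $\varepsilon\downarrow0$.
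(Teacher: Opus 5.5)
Your argument is correct. It arrives at the same fixed-point equation as the paper (your $\hat L+1$, with $\hat L:=L(\hat\sigma)$, is the paper's $1/t$), but by a different route.

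The paper substitutes $\sigma=\varepsilon t$ into the exact first-order condition $\nu\zeta+((1-\sigma)\nu-1)\zeta'=0$. It discards lower-order terms formally, iterates the resulting balance, and then evaluates $u$ at that $\sigma^*$. You compare objectives rather than first-order conditions: you maximize the surrogate $\phi$ exactly and transfer the result to $u$ through a multiplicative sandwich plus unimodality.

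The paper's route is shorter and locates $\sigma^*$ directly. However, it never shows that dropping terms in the first-order condition moves the root, and hence the value, only negligibly. Your route supplies exactly that. It also obtains $u^*_{\mathrm{GA}}$ from the sandwich alone, without locating $\sigma^*$; the value is the better-conditioned quantity because $\phi$ is very flat at its peak. Finally, your envelope identity $\phi(\hat\sigma)=\hat\sigma$ explains why $\sigma^*$ and $u^*_{\mathrm{GA}}$ share the same asymptotics, which the paper obtains from two separate computations.

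Three details need tightening.

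\emph{The lower bound in the sandwich.} The bound $u\ge(1-C\varepsilon)\phi$ does not hold uniformly on $(0,\varepsilon)$. At $\sigma=\varepsilon/(1+\varepsilon)$ we have $u=0$ while $\phi>0$, because the relative correction $\sigma\varepsilon/(\varepsilon-\sigma)$ in the numerator is not $O(\varepsilon)$ near the right endpoint. You only need the upper bound uniformly, which does hold wherever $u>0$. The lower bound is needed only at the single point $\hat\sigma$, where that correction equals exactly $\varepsilon/\hat L$. Alternatively, keep the exact numerator $\varepsilon-(1+\varepsilon)\sigma$ in $\phi$; it is solved just as easily.

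\emph{The size of the drop.} $\phi$ does not drop by a definite factor away from $\hat\sigma$. Exactly, $1-\phi(s\hat\sigma)/\phi(\hat\sigma)=(s-1-\log s)/(\hat L-\log s)$, so at $s=1\pm\delta$ the drop is only of order $\delta^2/\log(1/\varepsilon)$. Localization still works because this dominates the $O(\varepsilon)$ comparison error, but that is the comparison you should state.

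\emph{The meaning of $\sim$.} Do not rely on the additive reading $k=h+o(\varepsilon)$. Under it, (b) and the second half of (a) are vacuous, since every quantity involved is $o(\varepsilon)$, and (c) does not follow by division. In fact (c) would fail in that sense: $\hat L+1=\log(1/\varepsilon)+\log\log(1/\varepsilon)+c_\rho+1+o(1)$ with $c_\rho+1>0$ produces an additive discrepancy in the PoF of order $1/(\varepsilon\log^2(1/\varepsilon))$, which diverges. What you actually prove is relative error $O(\varepsilon)$ in (a) and $O(1/\log(1/\varepsilon))$ in (b). That is the ratio sense of $\sim$, and it is what makes your quotient in (c) legitimate.
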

 
\begin{proof}
Part (a) follows directly from \eqref{eq:p_star} and \eqref{eq:u_RR}: $p^*=(1-1/\sqrt{1+\varepsilon})/\rho\sim \varepsilon/(2\rho)$ and $u^*_{\mathrm{RR}}=(\sqrt{1+\varepsilon}-1)^2\sim\varepsilon^2/4$.
 
For part~(b), the numerator of the GA welfare becomes $(1-\sigma)(1+\varepsilon)-1=\varepsilon-\sigma(1+\varepsilon)$, which is positive only for $\sigma<\varepsilon/(1+\varepsilon)\approx\varepsilon$.  For small $\sigma>0$, $\zeta(\sigma)\sim -\rho\log\sigma$ and $\zeta'(\sigma)\sim -\rho/\sigma$.
Substituting $\sigma=\varepsilon t$ into the first-order condition $\nu\zeta+((1-\sigma)\nu-1)\zeta'=0$ and retaining leading-order terms yields
\begin{align*}
\rho\log(1/\varepsilon)+\rho\log(1/t)+O(1)=\frac{\rho(1-t)}{t}\ .
\end{align*}
Setting $L:=\log(1/\varepsilon)$, a first-order balance gives $1/t\sim L$, so $\log(1/t)\sim\log L=\log\log(1/\varepsilon)$. Substituting back yields the refined balance $1/t\sim L+\log L$, hence
\begin{align*}
\sigma^*=\varepsilon t^*\sim \frac{\varepsilon}{L+\log L}=\frac{\varepsilon}{\log(1/\varepsilon)+\log\log(1/\varepsilon)}\ .
\end{align*}
Since $\zeta(\sigma^*)\sim\rho(L+\log L)$ and the numerator $\varepsilon-\sigma^*(1+\varepsilon)\sim\varepsilon$, the optimal welfare is
\begin{align*}
u^*_{\mathrm{GA}}\sim\frac{\rho\varepsilon}{\rho(L+\log L)}=\frac{\varepsilon}{\log(1/\varepsilon)+\log\log(1/\varepsilon)}\ .
\end{align*}
Part (c) follows immediately.
\end{proof}

The divergence of the PoF when $\varepsilon\downarrow 0$ is driven by the different mechanisms of the two policies.  Under RR, the effective arrival rate $\rho p^*\sim\varepsilon/2$ and per-customer cost $1/(1-\rho p^*)\approx 1+\varepsilon/2$ are both determined by $p$, coupling throughput to congestion.  Under GA, the deterministic spacing breaks this coupling: the effective arrival rate $\lambda_{\tau^*}\sim\rho\varepsilon/\!\log(1/\varepsilon)\gg\rho p^*$ is much larger, while the per-customer cost $1/(1-\sigma^*)\approx 1+\varepsilon/\!\log(1/\varepsilon)$ stays just below $\nu=1+\varepsilon$.  The resulting welfare scales as $\varepsilon/\!\log(1/\varepsilon)$ versus $\varepsilon^2/4$, and the ratio diverges.

\begin{remark}\label{rem:bound_diff}
While the PoF is unbounded, the absolute welfare difference $u^*_{\mathrm{GA}}-u^*_{\mathrm{RR}}$ vanishes as $\nu\downarrow 1$ and also as $\nu\to\infty$ (both policies converge to full admission).  Therefore, there is a finite uniform upper bound on the absolute difference over all $(\rho,\nu)$.
\end{remark}

In Tables~\ref{tbl:1} and~\ref{tbl:2} we examine several numerical examples.
\begin{table}[h]
\centering
\begin{tabular}{crrrrrrr}
\hline
Case & $\nu$ & $p^*$ & $\tau^*$ & $\sigma^*$ & $u^*_{\mathrm{RR}}$ & $u^*_{\mathrm{GA}}$ & PoF \\
\hline
(i) &  1 & 0 & $\infty$ & 0 &  0 &  0 & -- \\
(ii) & 2 & 0.586 & 1.00 & 0.162 &  0.172 &  0.269 & 1.569 \\
(iii) & 5 & 1 & 0.12 & 0.444 &  1.500 &  1.513 & 1.009 \\
(iv) & 8 & 1  & 0 & 0.5 & 3 & 3 & 1 \\
\hline
\end{tabular}
\caption{Moderate load $\rho=0.5$, $\nu_1=4$ and $\nu_2=6$.}\label{tbl:1}
\end{table}

\begin{table}[h]
\centering
\begin{tabular}{crrrrrrr}
\hline
Case & $\nu$ & $p^*$ & $\tau^*$ & $\sigma^*$ & $u^*_{\mathrm{RR}}$ & $u^*_{\mathrm{GA}}$ & PoF \\
\hline
(i) &  1 & 0 & $\infty$ & 0 &  0 &  0 & -- \\
(ii) & 2 & 0.366 & 1.26 & 0.175 &  0.172 &  0.315 & 1.834 \\
(ii) & 10 & 0.855 & 0.24 & 0.620 &  4.675 &  4.961 & 1.061 \\
(iii) & 26 & 1 & 0.022 & 0.782 &  16.8 &  16.827 & 1.002 \\
(iv) & 32 & 1  & 0 & 0.8 & 21.6 & 21.6 & 1 \\
\hline
\end{tabular}
\caption{High load $\rho=0.8$, $\nu_1=25$ and $\nu_2=30$.}\label{tbl:2}
\end{table}

In Figure~\ref{fig:PoF} the PoF is plotted for three levels of $\rho$.  The divergence near $\nu=1$ predicted by Proposition~\ref{prop:PoF_divergence} is visible in all three curves.  For $\rho=0.95$ the upper boundary $\nu_2=420$ lies far outside the plotted range, so the curve remains above 1 throughout.

\begin{figure}[ht]
\centering
\begin{tikzpicture}
\begin{axis}[
    width=11cm, height=7cm,
    xmin=1, xmax=8, ymin=1, ymax=12,
    xlabel={$\nu$}, ylabel={$\mathrm{PoF}(\rho,\nu)$},
    legend style={at={(0.97,0.97)}, anchor=north east, font=\small},
    grid=both,
    grid style={line width=0.3pt, draw=gray!30},
    major grid style={line width=0.5pt, draw=gray!50},
    tick label style={font=\small}, label style={font=\small},
    clip=true,
]
\addplot[color=blue!80!black, thick, solid] coordinates {
(1.04140,11.24126)(1.06047,8.15431)(1.08234,6.30609)(1.10216,5.28508)
(1.12490,4.49465)(1.14550,3.98136)(1.16913,3.53893)(1.19054,3.22792)
(1.21509,2.94347)(1.23735,2.73384)(1.26001,2.55526)(1.28600,2.38341)
(1.30955,2.25123)(1.33656,2.12117)(1.36104,2.01918)(1.38911,1.91714)
(1.41455,1.83594)(1.44372,1.75364)(1.47016,1.68738)(1.50049,1.61953)
(1.52797,1.56440)(1.55948,1.50746)(1.58804,1.46084)(1.61713,1.41756)
(1.65048,1.37241)(1.68071,1.33511)(1.71538,1.29599)(1.74679,1.26351)
(1.78282,1.22931)(1.81547,1.20222)(1.85292,1.17580)(1.88685,1.15534)
(1.92577,1.13519)(1.96104,1.11943)(1.99695,1.10543)(2.03814,1.09149)
(2.07547,1.08050)(2.11828,1.06951)(2.15707,1.06083)(2.20156,1.05214)
(2.24188,1.04526)(2.28812,1.03837)(2.33003,1.03294)(2.37809,1.02751)
(2.42164,1.02324)(2.47159,1.01901)(2.51685,1.01570)(2.56294,1.01279)
(2.61581,1.00996)(2.66371,1.00779)(2.71866,1.00572)(2.76845,1.00419)
(2.82555,1.00278)(2.87729,1.00179)(2.93664,1.00096)(2.99042,1.00044)
(3.05210,1.00010)(3.10800,1.00000)
};
\addlegendentry{$\rho=0.25$}
\addplot[color=red!75!black, thick, dashed] coordinates {
(1.03115,12.00000)(1.06496,9.67712)(1.09594,7.12231)(1.13188,5.59016)
(1.16481,4.74102)(1.20300,4.08137)(1.23800,3.65157)(1.27859,3.27999)
(1.31579,3.01804)(1.35893,2.77789)(1.39846,2.60052)(1.43915,2.44915)
(1.48634,2.30325)(1.52958,2.19088)(1.57973,2.08019)(1.62569,1.99331)
(1.67899,1.90634)(1.72784,1.83711)(1.78449,1.76692)(1.83641,1.71043)
(1.89662,1.65259)(1.95180,1.60563)(2.01579,1.55716)(2.07444,1.51752)
(2.13479,1.48075)(2.20479,1.44247)(2.26893,1.41088)(2.34332,1.37783)
(2.41150,1.35044)(2.49057,1.32166)(2.56302,1.29772)(2.64706,1.27247)
(2.72407,1.25140)(2.81339,1.22912)(2.89524,1.21047)(2.97947,1.19283)
(3.07716,1.17411)(3.16668,1.15839)(3.27051,1.14167)(3.36566,1.12761)
(3.47601,1.11263)(3.57714,1.10001)(3.69443,1.08655)(3.80191,1.07520)
(3.92657,1.06308)(4.04080,1.05291)(4.17329,1.04280)(4.29470,1.03504)
(4.41965,1.02831)(4.56456,1.02183)(4.69736,1.01695)(4.85138,1.01235)
(4.99251,1.00898)(5.15621,1.00591)(5.30622,1.00378)(5.48020,1.00200)
(5.63963,1.00091)(5.82455,1.00020)(5.99400,1.00000)
};
\addlegendentry{$\rho=0.50$}
\addplot[color=green!55!black, thick, dotted, line width=1.5pt] coordinates {
(1.03593,12.00000)(1.07549,9.59532)(1.11191,7.13784)(1.15436,5.66012)
(1.19346,4.83908)(1.23902,4.19987)(1.28098,3.78260)(1.32989,3.42124)
(1.37493,3.16613)(1.42742,2.93196)(1.47577,2.75881)(1.52574,2.61091)
(1.58400,2.46823)(1.63764,2.35826)(1.70017,2.24987)(1.75775,2.16475)
(1.82486,2.07950)(1.88666,2.01160)(1.95869,1.94275)(2.02502,1.88732)
(2.10234,1.83056)(2.17354,1.78446)(2.25652,1.73687)(2.33294,1.69795)
(2.41195,1.66184)(2.50404,1.62424)(2.58884,1.59322)(2.68768,1.56075)
(2.77871,1.53384)(2.88480,1.50556)(2.98249,1.48203)(3.09637,1.45722)
(3.20123,1.43650)(3.32345,1.41458)(3.43600,1.39623)(3.55237,1.37887)
(3.68800,1.36043)(3.81290,1.34494)(3.95847,1.32845)(4.09253,1.31458)
(4.24879,1.29978)(4.39268,1.28730)(4.56039,1.27397)(4.71483,1.26271)
(4.89484,1.25066)(5.06062,1.24048)(5.25383,1.22957)(5.43176,1.22033)
(5.61571,1.21150)(5.83012,1.20203)(6.02756,1.19400)(6.25770,1.18538)
(6.46962,1.17806)(6.71663,1.17019)(6.94410,1.16351)(7.20922,1.15632)
(7.45337,1.15021)(7.73794,1.14364)(8.00000,1.13805)
};
\addlegendentry{$\rho=0.95$}
\draw[blue!60, thin, dashed] (axis cs:1.7778,1) -- (axis cs:1.7778,5.5)
  node[above, font=\tiny, blue!80!black] {$\nu_2^{(0.25)}$};
\draw[red!60, thin, dashed] (axis cs:6.0,1) -- (axis cs:6.0,2.0)
  node[above, font=\tiny, red!80!black] {$\nu_2^{(0.50)}$};
\end{axis}
\end{tikzpicture}
\caption{$\mathrm{PoF}(\rho,\nu)$ as a function of $\nu$ for $\rho\in\{0.25,0.50,0.95\}$.  Dashed vertical lines mark $\nu_2=(2-\rho)/(1-\rho)^2$ where applicable.  For $\rho=0.95$, $\nu_2=420$ lies far outside the plotted range.}
\label{fig:PoF}
\end{figure}

Figure~\ref{fig:PoF_zoom} shows a closer view with $\nu\in[1.2,4]$, including $\rho=0.99$.  The PoF curves are ordered by $\rho$: heavier load yields a higher PoF at every $\nu$, reflecting the greater advantage of deterministic spacing when congestion is more severe.  The curves for $\rho=0.95$ and $\rho=0.99$ are nearly indistinguishable, indicating rapid convergence to the $\rho\to 1$ (heavy-traffic) limit.

\begin{figure}[ht]
\centering
\begin{tikzpicture}
\begin{axis}[
    width=11cm, height=7cm,
    xmin=1.2, xmax=4, ymin=1, ymax=5,
    xlabel={$\nu$}, ylabel={$\mathrm{PoF}(\rho,\nu)$},
    legend style={at={(0.97,0.97)}, anchor=north east, font=\small},
    grid=both,
    grid style={line width=0.3pt, draw=gray!30},
    major grid style={line width=0.5pt, draw=gray!50},
    tick label style={font=\small}, label style={font=\small},
    clip=true,
]
\addplot[color=blue!80!black, thick, solid] coordinates {
(1.20000,3.11051)(1.23746,2.73288)(1.28428,2.39383)(1.32174,2.18998)
(1.35920,2.02639)(1.40602,1.86209)(1.44348,1.75429)(1.48094,1.66236)
(1.52776,1.56480)(1.56522,1.49775)(1.60268,1.43857)(1.64950,1.37368)
(1.68696,1.32778)(1.72441,1.28637)(1.77124,1.23996)(1.80870,1.20750)
(1.84615,1.18025)(1.89298,1.15195)(1.93043,1.13298)(1.96789,1.11661)
(2.01472,1.09916)(2.05217,1.08719)(2.08963,1.07668)(2.13645,1.06531)
(2.17391,1.05739)(2.21137,1.05038)(2.25819,1.04271)(2.29565,1.03734)
(2.33311,1.03256)(2.37993,1.02732)(2.41739,1.02363)(2.45485,1.02035)
(2.50167,1.01676)(2.53913,1.01424)(2.57659,1.01201)(2.62341,1.00959)
(2.66087,1.00791)(2.69833,1.00644)(2.74515,1.00487)(2.78261,1.00381)
(2.82007,1.00290)(2.86689,1.00197)(2.90435,1.00138)(2.94181,1.00090)
(2.98863,1.00045)(3.02609,1.00021)(3.06355,1.00007)(3.11037,1.00000)
};
\addlegendentry{$\rho=0.25$}
\addplot[color=red!75!black, thick, dashed] coordinates {
(1.20000,4.12466)(1.23746,3.65730)(1.28428,3.23579)(1.32174,2.98136)
(1.35920,2.77656)(1.40602,2.57028)(1.44348,2.43457)(1.48094,2.31859)
(1.52776,2.19526)(1.56522,2.11034)(1.60268,2.03527)(1.64950,1.95282)
(1.68696,1.89443)(1.72441,1.84167)(1.77124,1.78248)(1.80870,1.73974)
(1.84615,1.70054)(1.89298,1.65588)(1.93043,1.62319)(1.96789,1.59287)
(2.01472,1.55793)(2.05217,1.53208)(2.08963,1.50790)(2.13645,1.47979)
(2.17391,1.45882)(2.21137,1.43908)(2.25819,1.41596)(2.29565,1.39860)
(2.33311,1.38216)(2.37993,1.36281)(2.41739,1.34819)(2.45485,1.33430)
(2.50167,1.31785)(2.53913,1.30538)(2.57659,1.29347)(2.62341,1.27933)
(2.66087,1.26856)(2.69833,1.25824)(2.74515,1.24595)(2.78261,1.23656)
(2.82007,1.22753)(2.86689,1.21675)(2.90435,1.20849)(2.94181,1.20053)
(2.98863,1.19100)(3.02609,1.18368)(3.06355,1.17661)(3.11037,1.16812)
(3.14783,1.16159)(3.18528,1.15528)(3.23211,1.14767)(3.26957,1.14181)
(3.30702,1.13614)(3.35385,1.12929)(3.39130,1.12400)(3.42876,1.11888)
(3.47559,1.11268)(3.51304,1.10789)(3.55050,1.10324)(3.59732,1.09761)
(3.63478,1.09325)(3.67224,1.08901)(3.71906,1.08387)(3.75652,1.07989)
(3.79398,1.07601)(3.84080,1.07131)(3.87826,1.06765)(3.91572,1.06410)
(3.96254,1.05978)(4.00000,1.05642)
};
\addlegendentry{$\rho=0.50$}
\addplot[color=green!55!black, thick, dotted, line width=1.5pt] coordinates {
(1.20000,4.73080)(1.23746,4.21804)(1.28428,3.75464)(1.32174,3.47441)
(1.35920,3.24853)(1.40602,3.02069)(1.44348,2.87060)(1.48094,2.74220)
(1.52776,2.60549)(1.56522,2.51126)(1.60268,2.42789)(1.64950,2.33623)
(1.68696,2.27125)(1.72441,2.21250)(1.77124,2.14651)(1.80870,2.09883)
(1.84615,2.05506)(1.89298,2.00515)(1.93043,1.96858)(1.96789,1.93464)
(2.01472,1.89549)(2.05217,1.86651)(2.08963,1.83938)(2.13645,1.80780)
(2.17391,1.78423)(2.21137,1.76201)(2.25819,1.73598)(2.29565,1.71641)
(2.33311,1.69787)(2.37993,1.67601)(2.41739,1.65950)(2.45485,1.64377)
(2.50167,1.62515)(2.53913,1.61102)(2.57659,1.59751)(2.62341,1.58144)
(2.66087,1.56920)(2.69833,1.55746)(2.74515,1.54346)(2.78261,1.53275)
(2.82007,1.52245)(2.86689,1.51012)(2.90435,1.50067)(2.94181,1.49156)
(2.98863,1.48063)(3.02609,1.47222)(3.06355,1.46411)(3.11037,1.45434)
(3.14783,1.44681)(3.18528,1.43953)(3.23211,1.43075)(3.26957,1.42397)
(3.30702,1.41740)(3.35385,1.40946)(3.39130,1.40332)(3.42876,1.39737)
(3.47559,1.39015)(3.51304,1.38457)(3.55050,1.37914)(3.59732,1.37256)
(3.63478,1.36745)(3.67224,1.36248)(3.71906,1.35645)(3.75652,1.35177)
(3.79398,1.34720)(3.84080,1.34166)(3.87826,1.33734)(3.91572,1.33313)
(3.96254,1.32802)(4.00000,1.32403)
};
\addlegendentry{$\rho=0.95$}
\addplot[color=black!70, thick, dashdotted] coordinates {
(1.20000,4.75765)(1.23746,4.24300)(1.28428,3.77787)(1.32174,3.49658)
(1.35920,3.26982)(1.40602,3.04109)(1.44348,2.89039)(1.48094,2.76148)
(1.52776,2.62421)(1.56522,2.52959)(1.60268,2.44587)(1.64950,2.35382)
(1.68696,2.28856)(1.72441,2.22955)(1.77124,2.16328)(1.80870,2.11538)
(1.84615,2.07142)(1.89298,2.02128)(1.93043,1.98455)(1.96789,1.95045)
(2.01472,1.91112)(2.05217,1.88199)(2.08963,1.85473)(2.13645,1.82300)
(2.17391,1.79931)(2.21137,1.77698)(2.25819,1.75082)(2.29565,1.73115)
(2.33311,1.71251)(2.37993,1.69054)(2.41739,1.67394)(2.45485,1.65813)
(2.50167,1.63941)(2.53913,1.62520)(2.57659,1.61162)(2.62341,1.59546)
(2.66087,1.58315)(2.69833,1.57134)(2.74515,1.55726)(2.78261,1.54648)
(2.82007,1.53613)(2.86689,1.52372)(2.90435,1.51422)(2.94181,1.50505)
(2.98863,1.49405)(3.02609,1.48559)(3.06355,1.47742)(3.11037,1.46759)
(3.14783,1.46001)(3.18528,1.45268)(3.23211,1.44384)(3.26957,1.43702)
(3.30702,1.43040)(3.35385,1.42241)(3.39130,1.41623)(3.42876,1.41023)
(3.47559,1.40297)(3.51304,1.39734)(3.55050,1.39187)(3.59732,1.38524)
(3.63478,1.38010)(3.67224,1.37509)(3.71906,1.36901)(3.75652,1.36429)
(3.79398,1.35969)(3.84080,1.35410)(3.87826,1.34975)(3.91572,1.34551)
(3.96254,1.34035)(4.00000,1.33633)
};
\addlegendentry{$\rho=0.99$}
\end{axis}
\end{tikzpicture}
\caption{$\mathrm{PoF}(\rho,\nu)$ for $\nu\in[1.2,4]$ and $\rho\in\{0.25,0.50,0.95,0.99\}$.  The curves are ordered by $\rho$; the gap between $\rho=0.95$ and $\rho=0.99$ is small, indicating convergence to the heavy-traffic limit.}
\label{fig:PoF_zoom}
\end{figure}

Figure~\ref{fig:welfare} displays the optimal social welfare under both policies for the same three values of $\rho$.

\begin{figure}[ht]
\centering
\pgfplotsset{
  welfare panel/.style={
    width=4.8cm, height=5.2cm, xmin=0, xmax=8, ymin=0,
    xlabel={\small $\nu$},
    grid=both, grid style={line width=0.2pt, draw=gray!25},
    major grid style={line width=0.4pt, draw=gray!40},
    tick label style={font=\footnotesize}, label style={font=\footnotesize},
    title style={font=\small, yshift=-3pt},
    legend style={font=\scriptsize, draw=none, fill=none, at={(0.05,0.97)}, anchor=north west},
    clip=true,
  }
}
\begin{tikzpicture}
\begin{axis}[welfare panel, ymax=1.8, ylabel={\small $u^*$}, title={$\rho=0.25$}, at={(0,0)}]
\addplot[blue!80!black, thick, solid] coordinates {
(0.5000,0.00000)(0.9523,0.00000)(1.1030,0.00252)(1.2538,0.01433)
(1.4045,0.03427)(1.5553,0.06106)(1.7060,0.09373)(1.8945,0.14028)
(2.0452,0.17797)(2.1960,0.21566)(2.3467,0.25335)(2.4975,0.29104)
(2.6482,0.32873)(2.7990,0.36642)(2.9497,0.40410)(3.1005,0.44179)
(3.2513,0.47948)(3.4020,0.51717)(3.5528,0.55486)(3.7035,0.59255)
(3.8543,0.63023)(4.0050,0.66792)(4.1558,0.70561)(4.4950,0.79041)
(4.7965,0.86579)(5.0980,0.94116)(5.3995,1.01654)(5.7010,1.09192)
(6.0025,1.16729)(6.3040,1.24267)(6.6055,1.31805)(6.9447,1.40285)
(7.2462,1.47822)(7.5477,1.55360)(7.8492,1.62898)(8.0000,1.66667)
};
\addlegendentry{RR}
\addplot[red!75!black, thick, dashed] coordinates {
(0.5000,0.00000)(0.9523,0.00000)(1.1030,0.01325)(1.2538,0.03728)
(1.4045,0.06398)(1.5553,0.09249)(1.7060,0.12243)(1.8945,0.16149)
(2.0452,0.19386)(2.1960,0.22713)(2.3467,0.26119)(2.4975,0.29600)
(2.6482,0.33151)(2.7990,0.36766)(2.9497,0.40443)(3.1005,0.44179)
(3.2513,0.47948)(3.4020,0.51717)(3.5528,0.55486)(3.7035,0.59255)
(3.8543,0.63023)(4.0050,0.66792)(4.1558,0.70561)(4.4950,0.79041)
(4.7965,0.86579)(5.0980,0.94116)(5.3995,1.01654)(5.7010,1.09192)
(6.0025,1.16729)(6.3040,1.24267)(6.6055,1.31805)(6.9447,1.40285)
(7.2462,1.47822)(7.5477,1.55360)(7.8492,1.62898)(8.0000,1.66667)
};
\addlegendentry{GA}
\end{axis}
\begin{axis}[welfare panel, ymax=3.2, yticklabels={,,}, title={$\rho=0.50$}, at={(5.2cm,0)}]
\addplot[blue!80!black, thick, solid] coordinates {
(0.5000,0.00000)(0.9523,0.00000)(1.1030,0.00252)(1.2538,0.01433)
(1.4045,0.03427)(1.5553,0.06106)(1.7060,0.09373)(1.8945,0.14168)
(2.0452,0.18500)(2.3467,0.28292)(2.6482,0.39356)(2.9497,0.51478)
(3.2513,0.64501)(3.5528,0.78301)(3.8543,0.92781)(4.0050,1.00251)
(4.3442,1.17211)(4.6457,1.32286)(4.9472,1.47362)(5.2487,1.62437)
(5.5503,1.77513)(5.8518,1.92588)(6.0025,2.00126)(6.3040,2.15201)
(6.6055,2.30276)(6.9447,2.47236)(7.2462,2.62312)(7.5477,2.77387)
(7.8492,2.92462)(8.0000,3.00000)
};
\addplot[red!75!black, thick, dashed] coordinates {
(0.5000,0.00000)(0.9523,0.00000)(1.1030,0.01702)(1.2538,0.05008)
(1.4045,0.08829)(1.5553,0.13017)(1.7060,0.17498)(1.8945,0.23441)
(2.0452,0.28429)(2.3467,0.38942)(2.6482,0.50066)(2.9497,0.61716)
(3.2513,0.73831)(3.5528,0.86363)(3.8543,0.99274)(4.0050,1.05863)
(4.3442,1.20989)(4.6457,1.34765)(4.9472,1.48832)(5.2487,1.63173)
(5.5503,1.77772)(5.8518,1.92616)(6.0025,2.00126)(6.3040,2.15201)
(6.6055,2.30276)(6.9447,2.47236)(7.2462,2.62312)(7.5477,2.77387)
(7.8492,2.92462)(8.0000,3.00000)
};
\end{axis}
\begin{axis}[welfare panel, ymax=4.0, yticklabels={,,}, title={$\rho=0.95$}, at={(10.4cm,0)}]
\addplot[blue!80!black, thick, solid] coordinates {
(0.5000,0.00000)(0.9523,0.00000)(1.1030,0.00252)(1.2538,0.01433)
(1.4045,0.03427)(1.5553,0.06106)(1.7060,0.09373)(1.8945,0.14168)
(2.0452,0.18500)(2.3467,0.28292)(2.6482,0.39356)(2.9497,0.51478)
(3.2513,0.64501)(3.5528,0.78301)(3.8543,0.92781)(4.0050,1.00251)
(4.3442,1.17566)(4.6457,1.33494)(4.9472,1.49876)(5.2487,1.66672)
(5.5503,1.83846)(5.8518,2.01368)(6.0025,2.10251)(6.3040,2.28246)
(6.6055,2.46528)(6.9447,2.67415)(7.2462,2.86247)(7.5477,3.05311)
(7.8492,3.24594)(8.0000,3.34315)
};
\addlegendentry{RR}
\addplot[red!75!black, thick, dashed] coordinates {
(0.5000,0.00000)(0.9523,0.00000)(1.1030,0.01916)(1.2538,0.05788)
(1.4045,0.10375)(1.5553,0.15480)(1.7060,0.21001)(1.8945,0.28387)
(2.0452,0.34627)(2.3467,0.47853)(2.6482,0.61917)(2.9497,0.76686)
(3.2513,0.92059)(3.5528,1.07962)(3.8543,1.24336)(4.0050,1.32683)
(4.3442,1.51823)(4.6457,1.69221)(4.9472,1.86949)(5.2487,2.04980)
(5.5503,2.23293)(5.8518,2.41867)(6.0025,2.51247)(6.3040,2.70181)
(6.6055,2.89337)(6.9447,3.11135)(7.2462,3.30720)(7.5477,3.50489)
(7.8492,3.70432)(8.0000,3.80467)
};
\addlegendentry{GA}
\end{axis}
\end{tikzpicture}
\caption{Optimal social welfare $u^*_{\mathrm{RR}}$ (solid) and $u^*_{\mathrm{GA}}$ (dashed) as functions of $\nu$, for $\rho\in\{0.25,0.50,0.95\}$ (left to right).}
\label{fig:welfare}
\end{figure}

\pagebreak
\bibliographystyle{apalike}
\bibliography{Bibliography}

\end{document}